\def\ep{{\varepsilon}}
\def\R{\mathbb R}
\def\dir{\omega}
\def\P{\mathcal P}
\newtheorem{theo}{\textbf{Theorem}}[section]
\newtheorem{example}[theo]{\it{Example}}
\newtheorem{prop}[theo]{\textbf{Proposition}}
\newtheorem{cor}[theo]{\textbf{Corollary}}
\newtheorem{rem}[theo]{\textbf{Remark}}
\title{Evolution equations involving nonlinear truncated Laplacian operators}
\date{}
\begin{document}
\maketitle

\begin{center}
{\large\bf Matthieu Alfaro \footnote{IMAG, Univ. Montpellier, CNRS, Montpellier, France. E-mail: matthieu.alfaro@umontpellier.fr} and Isabeau Birindelli \footnote{Dipartimento di Matematica, Sapienza Universit\`{a}, Rome, Italy. E-mail:
isabeau@mat.uniroma1.it}.} \\
[2ex]
\end{center}


\tableofcontents

\vspace{10pt}

\begin{abstract} We first study the so-called Heat equation with two families of elliptic operators which are fully nonlinear, and depend on some eigenvalues of the Hessian matrix. The equation with operators including the \lq\lq large'' eigenvalues has strong similarities with  a Heat equation in lower dimension whereas, surprisingly, for operators including \lq\lq small'' eigenvalues it shares some properties with some transport equations. In particular, for these operators, the Heat equation (which is nonlinear) not only does not have the property that \lq\lq disturbances propagate with infinite speed'' but may lead to quenching in finite time. Last, based on our analysis of the Heat equations (for which we provide a large variety of special solutions) for these operators, we inquire on the associated Fujita blow-up phenomena.\\

\noindent{\underline{Key Words:} fully nonlinear elliptic operator, Heat equation, Cauchy problem, viscosity solutions, quenching phenomena, Fujita blow-up phenomena.}\\

\noindent{\underline{AMS Subject Classifications:} 35K05 (Heat equation), 35K65 (Degenerate parabolic equations), 35L02 (First-order hyperbolic equations), 35C06 (Self similar solutions), 35D40 (Viscosity solutions).}
\end{abstract}

\section{Introduction}\label{s:intro}

Let $N\geq 2$ be given. For $u:\R^N\to \R$, say of the class $C^2$, we denote by
$$
\lambda_1(D^2u)\leq \cdots\leq \lambda _N(D^2u)
$$
the eigenvalues of the Hessian matrix $D^2u$. For $1\leq k<N$ we consider the fully nonlinear elliptic operators given by
\begin{equation}
\label{Pmoins}
\P _k^- u:=\sum_{i=1}^k \lambda_i(D^2u),
\end{equation}
and
\begin{equation}
\label{Pplus}
\P _k^+ u:=\sum_{i=N-k+1}^N \lambda_i(D^2u).
\end{equation}
Notice that the case $k=N$ leads to the linear situation $\P _N^{-}u=\P _N^{+}u=\Delta u$, hence we will always suppose that $k<N$.

Our first main goal is to understand  the Heat equations
\begin{equation}
\label{eq-heat-moins}
\partial _t u=\P _k ^{-}  u \quad \text{ in } (0,+\infty)\times \R^{N},
\end{equation}
and
\begin{equation}
\label{eq-heat-plus}
\partial _t u=\P   _k^{+} u\quad \text{ in } (0,+\infty)\times \R^{N},
\end{equation}
together with the associated Cauchy problems. As revealed below by our analysis, naming \eqref{eq-heat-moins} a Heat equation is controversial but, for the moment, we adopt this denomination.

Our second main goal is to analyze the  Fujita blow-up phenomena \cite{Fuj-66}, \cite{Wei-81}, \cite{Kav-87}, \cite{Alf-fuj}, for the Cauchy problems associated with equations
\begin{equation}
\label{eq-fujita-moins}
\partial _t u=\P _k^{-} u+u^{1+p} \quad \text{ in } (0,+\infty)\times \R^{N},
\end{equation}
and
\begin{equation}
\label{eq-fujita-plus}
\partial _t u=\P _k ^{+}u+u^{1+p} \quad \text{ in } (0,+\infty)\times \R^{N},
\end{equation}
where $p>0$.

\medskip

Let us mention that these highly degenerate elliptic operators have been introduced in the context of differential 
geometry, by Wu \cite{WuH} and Sha \cite{ShaJP}, in order to solve problems related to manifolds with partial 
positive curvature. In a related fashion, they appear in the analysis of mean curvature flow in arbitrary 
codimension performed by Ambrosio and Soner   \cite{Amb-Son-96}.

In the context of elliptic PDE they are already considered as an example of degenerate fully non linear operators 
in the {\it User's guide} \cite{Cra-Ish-Lio-92}, but more recently both 
Harvey and Lawson in \cite{HarvLawCPAM, HarvLawIUMJ} and Caffarelli, Li and Nirenberg \cite{CafLiNirIII} have 
studied them in a completely new light.

Finally, in the very last years, some new results have been obtained on Dirichlet problems in bounded domains in 
relationship with the convexity of the domain, through the study of the maximum principle and the so called \lq\lq principal eigenvalue",  see \cite{OberSilv} and  \cite{Bir-Gal-Ish-18, Bir-Gal-Ish-preprint}.

Notice also that, due to the links between the behavior of solutions to evolution equations (Fujita blow up phenomenon) and the existence of steady states (nonlinear Liouville theorems), see \cite{Gui-Ni-Wan-92} e.g., the works of Birindelli, Galise and Leoni \cite{Bir-Gal-Leo-17}  and  Galise \cite{Gal-19} can be seen as a starting point for the present paper focused on evolution problems.

We  also wish to mention the very recent works of Blanc and Rossi that study
degenerate elliptic operators defined by $\mathcal P _j u:=\lambda_j(D^2u)$ for some $1\leq j\leq N$.  In other words, instead of considering the sum of the 
$k$ smallest or $k$ largest eigenvalues, they consider only the $j$-th eigenvalue of the Hessian matrix.
Even though they are different operators they share some analogies both in the definitions and in the difficulties 
that arise in studying them. These authors have considered both the steady state equation \cite{BlaRos} and the evolution equation \cite{BlaEsteRos} in a bounded domain. They mainly focus on the well-posedness of such problems (in the viscosity sense), and their approximation by a two-player zero-sum game.

As far as we know, this work is the first analysis of {\it evolution equations} in $\R^{N}$  involving the aforementioned nonlinear 
truncated Laplacian operators. Since we explore many directions, and collect results that we believe to be of 
equal importance, we take the liberty not to present a section with some so-called main results. Instead, we give 
below a rather detailed overview of the paper.

In Section \ref{s:explicit} we compute naive explicit solutions to the Heat equations \eqref{eq-heat-moins} and 
\eqref{eq-heat-plus}. This can be seen as a warm-up, already revealing the importance of convexity/concavity of 
solutions.

In Section \ref{s:self-similar} we inquire on the existence of self similar solutions to \eqref{eq-heat-moins} and 
\eqref{eq-heat-plus}. A key observation is that when a solution $u$ is \lq\lq one dimensional'', its Hessian has an eigenvalue of multiplicity (at least) $N-1$ and therefore, computing $\P _k^{\pm} u$ reduces to localize the last eigenvalue, see assumption \eqref{hyp-self}. The outcomes are the following: for equation \eqref{eq-heat-moins} involving $\P _k ^-$, self similar solutions have algebraic decay as $\vert x\vert \to +\infty$; for equation \eqref{eq-heat-plus} involving $\P _k ^+$ self similar solutions are the Heat kernels in lower dimension $k<N$ which, in particular, have a $L^1(\R ^N)$ norm which is  increasing in time like $t^{\frac{N-k}{2}}$.

In Section \ref{s:well}, we quote a result of Crandall and Lions \cite{Cra-Lio-90} to obtain the global well-posedness, in the viscosity sense, of the Cauchy problems \eqref{eq-heat-moins} and \eqref{eq-heat-plus}, and the local well-posedness, of the Cauchy problems \eqref{eq-fujita-moins} and \eqref{eq-fujita-plus}, which we plan to study in the end of the paper.

In Section \ref{s:radial} we inquire on radial solutions to the Cauchy problems \eqref{eq-heat-moins} and \eqref{eq-heat-plus}. It turns out that the Heat equation \eqref{eq-heat-moins} involving $\P _k ^-$ may not diffuse but transports. In other words, the operator $\P _k^-$ shares some similarities with some first order operators. As a by product, we can construct a very surprising example of an initial data driving the solution to zero everywhere in finite time, see Example \ref{ex:quenching}, which is referred as a {\it quenching} phenomena. On the other hand, and as already suspected since Section \ref{s:self-similar}, the Heat equation \eqref{eq-heat-plus} in dimension $N$ involving $\P _k^+$ behaves like the Heat equation in lower dimension $k<N$.

Finally, Section \ref{s:fujita} is devoted to the analysis of the Cauchy problems \eqref{eq-fujita-moins} and \eqref{eq-fujita-plus}. We aim at determining the Fujita exponent $p_F$ separating  \lq\lq sytematic blow-up when $0<p<p_F$'' from \lq\lq existence of global solutions when $p>p_F$'' (see Section \ref{s:fujita} for a more precise statement). The proofs rely on the variety of special solutions to the Heat equations \eqref{eq-heat-moins} and \eqref{eq-heat-plus} collected in the previous sections. We prove that $p_F=0$ for \eqref{eq-fujita-moins}  involving $\P _k^-$, whereas $p_F=\frac 2 k$ for \eqref{eq-fujita-plus} involving $\P _k ^+$. These facts were highly suspected from the previous sections but the proofs for \eqref{eq-fujita-plus} are far from trivial: the proof of Theorem \ref{th:fujita-plus} in particular requires the combination of the comparison principle, the subtle solutions of Example \ref{ex:compactly} and a comparison between $\P _k^+ u$ and $\Delta u$ which is available for radial and smooth solutions.

Let us mention that, from places to places, we have indicated some directions and presented some
preliminary computations that lead to partial conclusions or observations, and therefore raise some open problems. We have also tried to underline the variety of possible behaviors of the evolution equations under consideration by providing many examples of very different solutions.

\section{Explicit solutions to the Heat equations}\label{s:explicit}

\subsection{Convex/concave functions of one variable}\label{ss:convex}

If, for some $1\leq i\leq N$,
$$
u(t,x)=\varphi (x_i), \quad \text{ with $\varphi$ a $C^{2}$ convex function,}
$$
then $u$ solves \eqref{eq-heat-moins}. Indeed $D^2u=\text{Diag } (0,0,...,\underbrace{\varphi''(x_i)}_{\text{ith position}},...,0,0)$. Since $\varphi''(x_i)\geq 0$ the $N-1$ smallest eigenvalues are 0, hence $\mathcal P_k^{-} u=\lambda_1(D^2u)+\cdots+\lambda_k(D^2u)=0=\partial _t u$.

\medskip

Similarly if, for some $1\leq i\leq N$,
$$
u(t,x)=\varphi (x_i), \quad \text{ with $\varphi$ a $C^{2}$ concave function,}
$$
then $u$ solves \eqref{eq-heat-plus}.

\subsection{One variable travelling waves}\label{ss:tw}
For some $1\leq i\leq N$, let
$$
u(t,x)=\varphi(x_i-ct),\quad \text{ with $c\neq0$.}
$$
If $\varphi$ is convex then, again, $\mathcal P _k^{-} u=0$ and thus we need $-c\varphi'=0$ and $\varphi=cst$ already found above. On the other hand, if $\varphi$ is concave then $\mathcal P_k^{-}u(t,x)=\varphi''(x_i-ct)$ and thus we need $-c\varphi '=\varphi ''$ that is
$\varphi(z)=\alpha-\beta e^{-cz}$ with $\beta>0$ to get the concavity. Hence we are equipped with 
$$
u(t,x)=\alpha-\beta e^{-c(x_i-ct)}, \quad \alpha\in \R, \beta>0, c\neq 0,
$$
solutions to \eqref{eq-heat-moins} that are planar travelling waves connecting $\alpha$ to $-\infty$.

\medskip

Similarly, we are equipped with 
$$
u(t,x)=\alpha+\beta e^{-c(x_i-ct)}, \quad \alpha\in \R, \beta>0, c\neq 0,
$$
solutions to \eqref{eq-heat-plus} that are planar travelling waves connecting $\alpha$ to $+\infty$.

\subsection{Polynomial solutions}\label{ss:polynomial}




For any $A\in \mathcal S_N(\R)$, any $x_0\in \R^N$, any $y\in \R^N$, any $C\in\R$,
$$
u(t,x)=\left(\sum_{i=1}^k \lambda_i(A)\right)t+\frac 12 A(x-x_0)\cdot(x-x_0)+(x-x_0)\cdot y +C
$$
solves \eqref{eq-heat-moins}, whereas 
$$
u(t,x)=\left(\sum_{i=N-k+1}^N \lambda_i(A)\right)t+\frac 12 A(x-x_0)\cdot(x-x_0)+(x-x_0)\cdot y +C
$$
solves \eqref{eq-heat-plus}, since in the two above cases $D^{2}u=A$. Those solutions provide the sub and supersolutions used in the proof of \cite[Theorem 2.7]{Cra-Lio-90}.

\section{The Heat equations: self similar solutions}\label{s:self-similar}

If $u(t,x)$ solves \eqref{eq-heat-moins} or \eqref{eq-heat-plus} so does $Cu(\lambda^{2}t,\lambda x)$, $C>0$, $\lambda>0$. We thus look after a nonnegative self similar solution in the form
\begin{equation}
\label{ansatz}
u(t,x):=\frac{1}{t^\beta}\varphi\left(\frac{\vert x\vert}{\sqrt t}\right), 
\end{equation}
for some $\varphi=\varphi(r)$, $\beta\in \R$, and where $\vert x\vert =(x_1^2+\cdots+x_N^2)^{1/2}$. We also require $\varphi(0)=1$, $\varphi'(0)=0$.

We immediately get 
$$
\partial _t u=-\frac{\beta}{t^{\beta+1}}\varphi -\frac 1 2 \frac{\vert x\vert}{t^{\beta+\frac 32}}\varphi '.
$$
Next, after straightforward computations, we obtain the Hessian matrix
$$
D^{2}u=\frac{1}{t^{\beta+\frac 12}}\left(\frac{1}{\vert x\vert}\varphi'Id_N-\left(\frac{1}{\vert x\vert}\varphi' -\frac{1}{t^{\frac 12}}\varphi'' \right)\frac{x}{\vert x\vert }\otimes\frac{x}{\vert x\vert}\right).
$$
Since $\frac{x}{\vert x\vert}\otimes \frac{x}{\vert x\vert}$ is a matrix of rank 1, $\frac{1}{t^{\beta+\frac 12}\vert x\vert}\varphi '$ is an eigenvalue of $D^{2}u$ with multiplicity (at least) $N-1$. By considering the traces of the matrices we see that the remaining eigenvalue has to be $\frac{1}{t^{\beta+1}}\varphi ''$. From now on, we assume
\begin{equation}
\label{hyp-self}
\varphi''(r)\geq \frac 1 r \varphi'(r), \quad \text{ for all } r>0,
\end{equation}
which enables to compute $\mathcal P^{\pm} _k u$. Notice that other assumptions than \eqref{hyp-self} will be discussed in subsection \ref{ss:on-assumption} and will reveal much less natural.

\subsection{Operator $\P _k^{-}$}\label{ss:self-similar-moins}

Under assumption \eqref{hyp-self}, we have
$$
\P ^{-}_ku=\sum _{i=1}^{k}\lambda_i(D^{2}u)=\frac{k}{t^{\beta+\frac 12}\vert x\vert}\varphi',
$$
and thus the Heat equation \eqref{eq-heat-moins} is transferred into  the  linear first order ODE  Cauchy problem
\begin{equation}
\label{cauchy-ordre1}
\varphi'=-\beta \frac{2r}{r^2+2k}\varphi,\quad \varphi(0)=1,
\end{equation}
which is solved as 
$$
\varphi(r)=\left(\frac{2k}{r^2+2k}\right)^{\beta},
$$
which in turn does satisfy \eqref{hyp-self} if $\beta\leq -1$ or $\beta\geq 0$. In order to keep nonconstant and bounded solutions, we now restrict to $\beta>0$: going back to \eqref{ansatz}, we are equipped, for any $\beta>0$, $\mu >0$, with solutions
\begin{equation}\label{self-similar}
u(t,x)=\mu \left(\frac{1}{\vert x\vert ^{2}+2kt}\right)^{\beta},
\end{equation}
and also, for any $\beta>0$, $\mu>0$, $\ep>0$,
\begin{equation}\label{self-similar-2}
u_\ep(t,x)=\mu \left(\frac{1}{\vert x\vert ^{2}+2kt+\ep}\right)^{\beta}.
\end{equation}

\begin{rem} Let $1\leq p\leq +\infty$. For any $t>0$, $u(t,\cdot)$ belongs to $L^p(\R^N)$ as soon as $\beta>\frac{N}{2p}$ and we have
$$
\Vert u(t,\cdot)\Vert_{L^p(\R^N)}=\frac{\mu C}{t^{\beta-\frac{N}{2p}}},
$$
with $C=C(\beta,p,N,k)>0$. In particular the $L^\infty$ norm decreases like $\frac 1{t^\beta}$.
\end{rem}

\subsection{Operator $\P ^{+}_k$}\label{ss:self-similar-plus}

Under assumption \eqref{hyp-self}, we have
$$
\P ^{+}_ku=\sum _{i=N-k+1}^{N}\lambda_i(D^{2}u)=\frac{1}{t^{\beta+1}}\varphi''+\frac{k-1}{t^{\beta+\frac 12}\vert x\vert}\varphi',
$$
and thus the Heat equation \eqref{eq-heat-plus} is transferred into  the linear problem
\begin{equation}
\label{cauchy-ordre2}
\varphi''+\frac{r^{2}+2(k-1)}{2r}\varphi'+\beta \varphi=0, \quad \varphi(0)=1,\quad  \varphi'(0)=0.
\end{equation}
One recognizes the ODE arising when looking after self-similar solutions to the Heat equation in dimension $k<N$. Hence, $\varphi(r):=e^{-\frac{r^{2}}{4}}$ solves the above problem provided that $\beta=\frac k 2$, and does satisfy \eqref{hyp-self}. Hence, going back to \eqref{ansatz}, we are equipped for any $\mu>0$, with solutions
\begin{equation}
\label{self-similar-plus}
u(t,x)=\frac{\mu}{t^{\frac k 2}}e^{-\frac{\vert x\vert ^{2}}{4t}}, \quad t>0,\, x\in \R^N.
\end{equation}
In particular notice that, for $\mu=(4\pi)^{-\frac N2}$, we have $\int _{\R^{N}}u(t,x)dx=t^{\frac{N-k}{2}}\to +\infty$, as $t\to +\infty$.

\begin{rem} For the self-containedness of the argument, we briefly discuss the problem \eqref{cauchy-ordre2} when $\beta\neq \frac k 2$. Using a Sturm-Liouville approach, one can recast the ODE problem \eqref{cauchy-ordre2} into an integral equation and prove the existence and uniqueness of a  local solution which moreover always satisfies $\varphi''(0)=-\frac \beta	k$, and is global when $\beta>0$, see \cite[Proposition 3.1]{Har-Wei-82}.

If $\beta=0$, the solution is $\varphi\equiv 1$.

If $\beta<0$, we claim that $\varphi\geq 1$: if not, from $\varphi(0)=1$, $\varphi'(0)=0$, $\varphi''(0)>0$, there must be a point $r_0>0$ where $\varphi$ reaches a local maximum larger than 1; testing the equation at $r_0$ yields a contradiction. We get rid of these solutions which are larger than one.

Now, for $\beta >0$, $\beta\neq \frac k 2$,  writing $\varphi(r)=e^{-\frac{r^2}{4}}\psi(\frac{r^2}{4})$, we see that $\psi$ has to solve 
 $$
 z\psi''(z)+\left(\frac k2-z\right)\psi'(z)+\left(\beta-\frac{k}{2}\right)\psi(z)=0, \quad \psi(0)=1,\quad \psi'(0)=1-\frac{2\beta}{k},
 $$
 and thus $\psi(z)={}_1F_1\left(\frac{k-2\beta}{2},\frac k 2,z\right)$, where ${}_1 F_1(a,b,z)$ is 
the confluent hypergeometric function of first kind, or Kummer's function, see \cite{Abr-Ste-64}. It is known that, when $a$ is not a nonpositive integer, 
$$
{}_1F_1(a,b,z)\sim \frac{\Gamma(b)}{\Gamma(a)}
\frac{e^z}{z^{b-a}}, \quad\text{ as } z\to +\infty.
$$
This transfers, when $0<\beta<\frac k 2$, into $\varphi(r)\sim \frac{C}{r^{2\beta}}$, for some $C>0$, as $r\to+\infty$,  and thus $\int _0 ^{+\infty}r^{N-1}\varphi(r)dr=+\infty$, so that these solutions are not \lq\lq admissible''. When $\beta>\frac  k2$ and $\frac{k-2\beta}{2}$ is not a negative integer, the conclusion is again $\varphi(r)\sim \frac{C}{r^{2\beta}}$, for some $C>0$ or $C<0$, as $r\to+\infty$, and these solutions are not \lq\lq admissible''. Last, when $\frac{k-2\beta}{2}$ is a negative integer, say $-p$, $\psi(z)$ is the $p$-th generalized Laguerre polynomial, which is known \cite[Section 6.31]{Sze-75} to change sign on $(0,+\infty)$, and thus these solutions are not \lq\lq admissible''.
\end{rem}

\subsection{On assumption \eqref{hyp-self}}\label{ss:on-assumption}

The goal of this short subsection is to show that assumption \eqref{hyp-self} is the one to be retained, as claimed above.

First, assuming the reverse inequality, namely
\begin{equation}
\label{hyp-reverse}
\varphi''(r)\leq \frac 1 r \varphi'(r), \quad \text{ for all } r>0,
\end{equation}
we can still compute $\mathcal P^{\pm} _k u$, where $u(t,x)$ is given by the self-similar ansatz \eqref{ansatz}. But, when dealing with operator $\P _k^{-}$, we now reach the second order ODE problem \eqref{cauchy-ordre2}, whose solution $\varphi(r)=e^{-\frac{r^{2}}{4}}$ does not satisfy \eqref{hyp-reverse}. Similarly, when dealing with operator $\P _k^{+}$, we now reach the first order ODE problem \eqref{cauchy-ordre1}, whose solutions $\varphi(r)=\left(\frac{2k}{r^{2}+2k}\right)^{\beta}$ do not satisfy \eqref{hyp-reverse}. 

Next, we may only assume the existence of $\ep>0$ such that
\begin{equation}
\label{hyp-court}
\varphi''(r)\geq \frac 1 r \varphi'(r), \quad \text{ for all } 0<r<\ep.
\end{equation}
Then, dealing with $\P _k ^{-}$, we reach $\varphi(r)=\left(\frac{2k}{r^{2}+2k}\right)^{\beta}$, say for $\beta>0$, for which $\varphi''(r)>\frac 1 r \varphi'(r)$ holds all along $(0,+\infty)$. In other words, we are back to assumption \eqref{hyp-self}. The same argument applies when dealing with $\P _k ^{+}$.
 
 Last, assuming \eqref{hyp-reverse} only a small bounded interval $(0,\ep)$, we reach a contradiction as in the case  of assumption \eqref{hyp-reverse}.

\section{Well-posedness of the different Cauchy problems}\label{s:well}

For $A\in \mathcal S_N(\R)$, we define $F_k^-(A):=\sum_{i=1}^k \lambda _i(A)$ and $F_k^+(A):=\sum_{i=N-k+1}^N\lambda _i(A)$. From the min-max theorem for eigenvalues of real symmetric matrices, we have that, for any $A,B\in \mathcal S _N(\R)$,
$$
A\geq B \Longrightarrow \lambda_i(A)\geq \lambda _i(B), \forall 1\leq i\leq N \Longrightarrow F_k^\pm(A)\geq F_k^\pm(B),
$$
and that, for any $A\in \mathcal S_N(\R)$, any $c\in \R$, 
$$
F_k^\pm(A+cId_N)-F_k^\pm(A)\geq kc.
$$

This enables to quote \cite[Theorem 2.7]{Cra-Lio-90}: for a initial data  $u_0\in UC(\R^N)$, the Cauchy problems associated with the Heat equations \eqref{eq-heat-moins} and \eqref{eq-heat-plus} admit a comparison principle and are globally well-posed, solutions being understood in the viscosity sense,  \cite{Cra-Lio-90}, \cite{Cra-Ish-Lio-92}, \cite{Eva-Spr-91}, \cite{Men-Qua-11}. The proof follows the three main steps: first prove a comparison principle using a  dedoubling variable method, next construct polynomial sub and supersolutions in the spirit of subsection \ref{ss:polynomial}, last conclude by the Perron's method.

By a straightforward and classical modification of the above procedure, one can prove the well-posedness of the Cauchy problems associated with equations \eqref{eq-fujita-moins} and \eqref{eq-fujita-plus}, at least {\it locally in time}. The main issue is then to determine if the local solution is global or blows up in finite time, which will be discussed in Section \ref{s:fujita}.

\section{The Heat equations: radial solutions of the Cauchy problems}\label{s:radial}

We  consider the Cauchy problem \eqref{eq-heat-moins} or \eqref{eq-heat-plus} starting from a radial initial data $u_0(x)=g(\vert x\vert)$, where $g:[0,+\infty)\to\R$. We suspect that $u(t,\cdot)$ remains radial for $t>0$ and therefore use the ansatz
$$
u(t,x)=\psi(t,\vert x\vert),
$$
for some $\psi=\psi(t,r)$. We compute the Hessian matrix and get
$$
D^{2}u=\frac{1}{\vert x\vert}(\partial _r \psi) Id_N-\left(\frac{1}{\vert x\vert}\partial _r \psi-\partial_{rr}\psi\right)\frac{x}{\vert x\vert}\otimes \frac{x}{\vert x\vert}.
$$
whose eigenvalues are $\frac{1}{\vert x\vert}\partial _r \psi$ with multiplicity (at least) $N-1$ and $\partial_{rr}\psi$ (see Section \ref{s:self-similar}). From now on, guided by \eqref{hyp-self} and subsection \ref{ss:on-assumption}, we assume
\begin{equation}
\label{hyp-cauchy}
\partial _{rr}\psi(t,r)\geq \frac 1 r \partial _r \psi(t,r), \text{ for all } t>0, r>0,
\end{equation}
which enables to compute $\mathcal P^{\pm} _k u$.

\subsection{Operator $\P ^{-}_k$}\label{ss:radial-moins}

Under assumption \eqref{hyp-cauchy}, we have
$$
\P ^{-}_ku=\sum _{i=1}^{k}\lambda_i(D^{2}u)=\frac{k}{\vert x\vert}\partial _r \psi,
$$
and thus the Heat equation \eqref{eq-heat-moins} is transferred into  the  linear transport equation
$$
\partial _t\psi=\frac k r \partial _r \psi,
$$
that can be solved via the method of characteristics. Indeed, for $r_0>0$, we have
$$
\frac{d}{dt}\left[\psi\left(t,(r_0^2-2kt)^{\frac 12}\right)\right]=0,
$$
and thus $\psi\left(t,(r_0^2-2kt)^{\frac 12}\right)=\psi(0,r_0)=g(r_0)$ which is recast
\begin{equation}\label{sol-transport-bis}
\psi(t,r)=g\left((2kt+r^{2})^{\frac{1}{2}}\right).
\end{equation}

Conversely, we need to check that assumption \eqref{hyp-cauchy} is satisfied. From \eqref{sol-transport-bis} we compute, assuming further regularity for $g$,
$$
\partial _{rr}\psi(t,r)-\frac 1 r \partial _r \psi(t,r)=\frac{r^2}{2kt+r^2}\left(g''(s)-\frac{1}{s}g'(s)\right),
$$
which we want to be nonnegative, and where we have let $s=(2kt+r^{2})^{\frac{1}{2}}$. 

As a conclusion, we have proved the following.


\begin{theo}[Radial solutions of the Cauchy problem \eqref{eq-heat-moins}] If $g:[0,+\infty)\to\R$  is twice differentiable on $(0,+\infty)$ and such that
\begin{equation}
\label{hyp-CI}
g''(s)-\frac  1 s g'(s)\geq 0, \forall s>0,
\end{equation}
then the solution of the Cauchy problem \eqref{eq-heat-moins} starting from $u_0(x)=g(\vert x\vert)$ is 
\begin{equation}
\label{sol-transport}
u(t,x)=g\left(\sqrt{2kt+\vert x\vert ^{2}}\right).
\end{equation}
\end{theo}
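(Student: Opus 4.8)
The plan is to verify directly that the candidate $u(t,x) = g(\sqrt{2kt+|x|^2})$ given by \eqref{sol-transport} is indeed the (unique) viscosity solution of the Cauchy problem \eqref{eq-heat-moins} with initial data $u_0(x) = g(|x|)$. By the global well-posedness and comparison principle quoted from \cite[Theorem 2.7]{Cra-Lio-90} in Section \ref{s:well} (valid for $u_0 \in UC(\R^N)$), it suffices to show that this $u$ is a classical solution of \eqref{eq-heat-moins} on $(0,+\infty)\times\R^N$ and that it attains the initial datum continuously as $t\to 0^+$; uniqueness then takes care of the rest. So the argument is really a computation backed by the earlier well-posedness result, not a fresh PDE construction.

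First I would set $\psi(t,r) = g(\sqrt{2kt+r^2})$ and record that $u(t,x) = \psi(t,|x|)$, so the eigenvalue structure of $D^2u$ computed at the start of Section \ref{s:radial} applies: the eigenvalues are $\frac1{|x|}\partial_r\psi$ with multiplicity (at least) $N-1$ and $\partial_{rr}\psi$ with multiplicity (at least) $1$. Next I would invoke the key identity already displayed before the theorem statement,
\[
\partial_{rr}\psi(t,r) - \frac1r\partial_r\psi(t,r) = \frac{r^2}{2kt+r^2}\left(g''(s) - \frac1s g'(s)\right), \qquad s = \sqrt{2kt+r^2},
\]
which together with hypothesis \eqref{hyp-CI} gives $\partial_{rr}\psi \geq \frac1r\partial_r\psi$, i.e. assumption \eqref{hyp-cauchy} holds. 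Hence $\frac1{|x|}\partial_r\psi$ is the smallest eigenvalue and, by the computation in subsection \ref{ss:radial-moins}, $\P_k^- u = \frac{k}{|x|}\partial_r\psi$. Then I would differentiate $\psi(t,r)=g(s)$ in $t$ and in $r$: $\partial_t\psi = g'(s)\,\frac{k}{s}$ and $\partial_r\psi = g'(s)\,\frac{r}{s}$, so $\frac{k}{r}\partial_r\psi = \frac{k}{s}g'(s) = \partial_t\psi$, which is exactly the transported equation $\partial_t\psi = \frac{k}{r}\partial_r\psi$ derived in that subsection, equivalently $\partial_t u = \P_k^- u$ pointwise on $(0,+\infty)\times\R^N$. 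For the initial condition, continuity of $g$ gives $u(t,x)\to g(|x|) = u_0(x)$ locally uniformly as $t\to 0^+$, and $u_0 \in UC(\R^N)$ whenever $g$ is, e.g., continuous with the appropriate growth, so the hypotheses of the quoted well-posedness theorem are met.

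The one point deserving care — and the main (mild) obstacle — is regularity at the origin $x=0$, where the formula for $D^2u$ in terms of $\psi$ is written in a way that is singular in $|x|$. There one should note that $u(t,x) = g(\sqrt{2kt+|x|^2})$ is a smooth function of $|x|^2$ near $x=0$ for $t>0$ (since $s = \sqrt{2kt+|x|^2}$ stays bounded away from $0$), so $D^2u$ is continuous there and its eigenvalues are simply the limits $\partial_{rr}\psi(t,0)=\frac1s g'(s)\big|_{s=\sqrt{2kt}}$ in every direction; the condition $\P_k^- u = \partial_t u$ at $x=0$ then follows by continuity from its validity for $x\neq 0$, or can be checked directly. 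A secondary bookkeeping point is that \eqref{hyp-CI} is stated only for $s>0$, which is all that is needed since $s=\sqrt{2kt+r^2}>0$ for $t>0$; no assumption on $g$ at $s=0$ beyond continuity is required. With these observations the verification is complete and the theorem follows from uniqueness in \cite{Cra-Lio-90}.
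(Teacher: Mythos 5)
Your proposal is correct and follows essentially the same route as the paper: the radial eigenvalue computation, the reduction of $\P_k^-u$ to $\frac{k}{|x|}\partial_r\psi$ under \eqref{hyp-cauchy}, and the key identity $\partial_{rr}\psi-\frac1r\partial_r\psi=\frac{r^2}{2kt+r^2}\bigl(g''(s)-\frac1s g'(s)\bigr)$ are exactly the ingredients of subsection \ref{ss:radial-moins}, the only difference being that you verify the explicit formula directly rather than deriving it by the method of characteristics and then checking the assumption a posteriori. Your added care about regularity at the origin and the explicit appeal to uniqueness from \cite{Cra-Lio-90} to identify the classical solution with the viscosity solution are welcome but do not change the substance of the argument.
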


In other words, in the above situation, the so-called Heat equation \eqref{eq-heat-moins} does not diffuse but transports.  Let us investigate a few examples, for which we always assume $\mu >0$ and $\beta >0$.

\begin{example}\label{ex:exp} Function $g(s):=\mu e^{-\frac{ s ^{2}}{2k}}$ satisfies \eqref{hyp-CI}. From \eqref{sol-transport} we get the solution 
\begin{equation}
\label{separate-var}
u(t,x)=\mu e^{-t}e^{-\frac{\vert x\vert ^{2}}{2k}}.
\end{equation}
Notice that $u(t,x)=e^{-t}v(x)$ where $
v(x):=\mu e^{-\frac{\vert x\vert ^{2}}{2k}}$
is an eigenelement for operator $\P_k^{-}$: as noticed in \cite{Bir-Gal-Leo-17}, $v$ solves $\P _k^{-} v +v=0$.
\end{example}

\begin{example} Function $g(s):=\frac{\mu}{(\ep+s^{2})^\beta}$, $\ep>0$, satisfies \eqref{hyp-CI}. From \eqref{sol-transport} we recover the solution \eqref{self-similar-2}.
\end{example}

\begin{example}
Any function $g:[0,+\infty)\to \R$, twice differentiable on $(0,+\infty)$, which is nonincreasing and convex satisfies \eqref{hyp-CI}. In this framework $g(s)=\mu e^{-s}$ provides the solution $u(t,x)=\mu e^{-\sqrt{2kt+\vert x\vert ^{2}}}$ whereas $g(s)=\frac{\mu}{(\ep+s)^\beta}$, $\ep> 0$, provides the solution $u(t,x)=\frac{\mu}{\left(\ep+\sqrt{2kt+\vert x\vert ^{2}}\right)^\beta}$.
\end{example}

The appearance of a transport equation implies very striking phenomena for a so-called Heat equation: as shown by the following example, global extinction in finite time, or quenching, may occur.

\begin{example}[Quenching]\label{ex:quenching} Straightforward computations show that the smooth function
$$
g(s):=\begin{cases}e^{\frac{1}{s-1}} &\text{ if } 0\leq s<1\\
0 &\text{ if } s\geq 1\end{cases}
$$
does satisfy \eqref{hyp-CI}. Since $u_0(x)=g(\vert x\vert)$ is compactly supported in the ball of radius 1, the associated solution \eqref{sol-transport} of the Cauchy problem vanishes everywhere as soon as $t\geq \frac 1{2k}$, that is a quenching phenomena in finite time occurs.
\end{example}

\subsection{Operator $\P ^{+}_k$}\label{ss:plus}
Under assumption \eqref{hyp-cauchy}, we have
$$
\P ^{+}_ku=\sum _{i=N-k+1}^{N}\lambda_i(D^{2}u)=\partial_{rr}\psi+\frac{k-1}{\vert x\vert}\partial _r \psi
$$
and thus the Heat equation \eqref{eq-heat-plus} is transferred into  the linear convection diffusion equation
\begin{equation}
\label{to-be-solved}
\partial _t\psi=\partial_{rr}\psi+\frac {k-1} r \partial _r \psi.
\end{equation}
We assume that $g$ is bounded on $[0,+\infty)$. We denote by $\widetilde g$ its radial extension to $\R^{k}$, namely $ \widetilde g(x):=g(\vert x\vert)$ for $x\in \R^{k}$. We thus select
\begin{equation}
\label{select}
\psi(t,r)=\frac{1}{(4\pi t)^{\frac k 2}}\int _{\R^k} e^{-\frac{\vert r \dir -y\vert ^2}{4t}}  \widetilde g(y)dy=(G_k(t,\cdot)* \widetilde g)(r\dir ), \quad t>0, \, r\in \R,
\end{equation}
where $\dir$ is any unit vector in $\R ^k$. Since \eqref{to-be-solved} corresponds to solving the radial Heat equation in $\R^k$, the restriction of $\psi(t,r)$ to the $t>0$, $r>0$, solves \eqref{to-be-solved} and starts from $g(r)$.


As a conclusion, we have proved the following.

\begin{theo}[Radial solutions of the Cauchy problem \eqref{eq-heat-plus}]\label{th:radial-bis} If $g:[0,+\infty)\to \R$ is bounded and such that $\psi(t,r)$ given by \eqref{select} satisfies 
\begin{equation}
\label{hyp-cauchy-bis}
\partial _{rr}\psi(t,r)\geq \frac 1 r \partial _r \psi(t,r), \text{ for all } t>0, r>0,
\end{equation}
then the solution of the Cauchy problem \eqref{eq-heat-plus} starting from $u_0(x)=g(\vert x\vert)$ is
\begin{equation}
\label{sol-convolution}
u(t,x)=\psi(t,\vert x\vert)=\frac{1}{(4\pi t)^{\frac k 2}}\int _{\R ^k} e^{-\frac{ \vert(\vert x\vert \dir -y)\vert ^2}{4t}}  g(\vert y\vert)dy, \quad t>0,\, x\in \R^{N},
\end{equation}
where $\dir$ is any unit vector in $\R ^k$.
\end{theo}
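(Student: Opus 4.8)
The plan is to verify directly that the candidate $u(t,x) = \psi(t,|x|)$ defined by \eqref{sol-convolution} is the (unique, by the comparison principle quoted in Section \ref{s:well}) viscosity solution of the Cauchy problem \eqref{eq-heat-plus} with initial data $u_0(x) = g(|x|)$. The strategy mirrors the $\P_k^-$ case: first establish that $\psi$ solves the reduced radial PDE \eqref{to-be-solved}, then use the structural assumption \eqref{hyp-cauchy-bis} to identify $\P_k^+ u$ with the right-hand side of \eqref{to-be-solved}, so that $u$ solves \eqref{eq-heat-plus}. Finally, one checks the initial condition and invokes uniqueness.

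More precisely, the first step is to observe that $\widetilde g(x) := g(|x|)$ is bounded on $\R^k$, hence $v(t,x) := (G_k(t,\cdot) * \widetilde g)(x)$, the heat extension in $\R^k$, is a classical (indeed $C^\infty$ for $t>0$) solution of $\partial_t v = \Delta_{\R^k} v$ on $(0,+\infty)\times\R^k$, with $v(t,\cdot) \to \widetilde g$ as $t\to 0^+$ at every continuity point of $g$. Since $\widetilde g$ is radial, so is $v$: $v(t,x) = \psi(t,|x|)$ with $\psi$ as in \eqref{select}, and writing the Laplacian in polar coordinates in $\R^k$ shows $\psi$ solves \eqref{to-be-solved} on $t>0$, $r>0$. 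The second step is the one already carried out in subsection \ref{ss:plus}: the Hessian of $u(t,x) = \psi(t,|x|)$ on $\R^N$ has eigenvalue $\frac{1}{|x|}\partial_r\psi$ with multiplicity $N-1$ and eigenvalue $\partial_{rr}\psi$ with multiplicity $1$; under \eqref{hyp-cauchy-bis} the $k$ \emph{largest} eigenvalues are $\partial_{rr}\psi$ together with $k-1$ copies of $\frac1{|x|}\partial_r\psi$, so $\P_k^+ u = \partial_{rr}\psi + \frac{k-1}{|x|}\partial_r\psi = \partial_t\psi = \partial_t u$, i.e.\ $u$ solves \eqref{eq-heat-plus} pointwise for $t>0$, $x\neq 0$ (and hence in the viscosity sense, extending to $x=0$ by continuity and the smoothness of $v$).

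For the initial data, one uses that $u(t,x) = v(t, |x| \dir)$ for a fixed unit vector $\dir \in \R^k$, together with the standard fact that $v(t,\cdot) \to \widetilde g$ locally uniformly on the set where $g$ is continuous; combined with the boundedness of $g$ (which also gives $u \in UC$-type bounds, or at least the $L^\infty$ control needed to be in the comparison class), this yields $u(t,x) \to g(|x|)$ as $t\to 0^+$. The conclusion then follows from the uniqueness statement of \cite[Theorem 2.7]{Cra-Lio-90} quoted in Section \ref{s:well}: the bounded viscosity solution with the prescribed continuous initial data is unique, and our $u$ is one such solution.

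The main obstacle is not the PDE verification—which is essentially the polar-coordinate computation already displayed before the theorem—but the interplay with the hypothesis \eqref{hyp-cauchy-bis}, which is imposed \emph{on $\psi$} rather than directly on $g$, and is genuinely needed: without it the ordering of the eigenvalues can flip and $\P_k^+ u$ no longer equals the right-hand side of \eqref{to-be-solved}. One should remark (as the paper evidently intends, given the structure of subsection \ref{ss:on-assumption}) that \eqref{hyp-cauchy-bis} is the natural condition here, and that verifying it in concrete cases reduces to a sign condition on $g$ propagated by the heat semigroup; a clean sufficient condition is that $s\mapsto g(\sqrt{s})$ be convex, or that $\widetilde g$ be subharmonic-compatible with the radial monotonicity, but pinning down the sharp hypothesis on $g$ is the delicate point and is best left as the content of the subsequent examples (e.g.\ Example \ref{ex:compactly}). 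A secondary technical point is justifying differentiation under the integral sign in \eqref{select} and the behavior as $r\to 0^+$, both of which are routine given the Gaussian kernel.
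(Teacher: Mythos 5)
Your proposal is correct and follows essentially the same route as the paper: reduce the equation under the radial ansatz and hypothesis \eqref{hyp-cauchy-bis} to the radial Heat equation \eqref{to-be-solved} in $\R^k$, identify $\psi$ as the corresponding heat-kernel convolution \eqref{select}, and conclude by the comparison/uniqueness result quoted from \cite{Cra-Lio-90}. You are somewhat more explicit than the paper about the verification of the initial condition and the invocation of uniqueness, but these are exactly the steps the paper leaves implicit.
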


Let us make a few comments. First, notice that $x$ lives in $\R ^N$ but we integrate over $y\in \R^k$. Next, observe that  \eqref{sol-convolution} does not provide a convolution formula for {\it any} radial solution, which would be in contrast with the fact that the equation is fully nonlinear. Actually, \eqref{sol-convolution} provides a convolution formula under condition \eqref{hyp-cauchy-bis} on the initial data, which is more consistent. Nonetheless, notice that \eqref{hyp-cauchy-bis} is stable by linear combination with nonnegative coefficients.

\begin{example}\label{ex:gauss}
For the Gaussian initial data $g(s)=\frac{1}{(4\pi a)^{\frac k 2}}e^{-\frac{s^2}{4a}}$, $a>0$, the convolution \eqref{select} is straightforwardly computed as $\psi(t,r)=\frac{1}{(4\pi(a+t))^{\frac k 2}}e^{-\frac{r^{2}}{4(a+t)}}$ which satisfies \eqref{hyp-cauchy-bis}. Hence we get the solution 
\begin{equation}\label{sol-gauss}
u(t,x)=\frac{1}{(4\pi(a+t))^{\frac k2}}e^{-\frac{\vert x\vert^{2}}{4(a+t)}}
\end{equation}
for $t>0$, $x\in \R^N$. Notice that, for any $1\leq p\leq +\infty$,
$$
\Vert u(t,\cdot)\Vert_{L^p(\R^N)}\sim \frac{C}{t^{\frac 12(k-\frac N p)}}, \quad \text{ as } t\to +\infty.
$$
where $C=C(a,p,N,k)>0$. In particular the $L^\infty$ norm decreases like $\frac{1}{t^{\frac k2}}$.
\end{example}

\begin{example} For the step function $g(s):=\mathbf 1_{(0,1)}(s)$ and $k=1$ (for simplicity), we use  \eqref{select} to compute $(\partial _{rr}\psi -\frac 1 r\partial _r \psi)(t,r)$ for $t>0$, $r>0$, and observe that it has the sign of 
$$
B(t,r):=\int _{-1}^{1}  (r(r-y)^{2}-2ty)
e^{-\frac{(r-y)^2}{4t}}dy.
$$
Using a formal calculation software, we get
$$
B(t,r)=2te^{-\frac{1+r^2}{4t}}\left(e^{\frac{r}{2t}}(-r+r^2+2t)-e^{-\frac{r}{2t}}(r+r^2+2t)\right),
$$
which fails to be nonnegative as soon as $0<r<1$, $0<t<\frac{r-r^2}2$. Hence \eqref{hyp-cauchy-bis} is not satisfied. Notice however that $g\not \in UC(\R^N)$ so that the well-posedness of the Cauchy problem is not obvious.
\end{example}

Now, we intend to provide examples of compactly supported initial data that satisfy \eqref{hyp-cauchy-bis}. This is more complicated than checking condition \eqref{hyp-CI}, which is the counterpart of condition \eqref{hyp-cauchy-bis} for  the Heat equation involving $\P _k ^-$, since the solution $\psi(t,r)$  is now given by a convolution, namely \eqref{select}. With additional assumptions on the initial data, we now try to find a \lq\lq local'' sufficient condition for \eqref{hyp-CI} to hold. Assuming that there is $\ep>0$ such that
\begin{equation}\label{tentative1}
\text{$g$ is of the class $C^{2}$ on $[0,\ep]$, nonnegative, $g'(0)=0$, 
and $g\equiv 0$ on $[\ep,+\infty)$,}
\end{equation}
the following computations are licit.  Formula \eqref{select} yields
$$
(4\pi t)^{\frac k2} \psi(t,r)=\int_{\R ^{k}}e^{-\frac{\vert re_1-y\vert ^2}{4t}}g(\vert y\vert)dy=\int_{\vert y\vert <\ep}e^{-\frac{\vert re_1-y\vert ^2}{4t}}g(\vert y\vert)dy,
$$
where $e_1$ denotes the first vector of the canonical basis of $\R^k$. In the sequel a generic $y\in \R^{k}$ is recast $y=(y_1,y')$ with $y_1\in \R$, $y'\in \R^{k-1}$. We differentiate with respect to $r$ and get, using  the shortcut $z=z(y):=re_1-y$,
\begin{eqnarray*}
(4\pi t)^{\frac k2} \psi _r (t,r)&=&\int_{\vert y\vert <\ep}\frac{-2(r-y_1)}{4t}
e^{-\frac{\vert z \vert ^2}{4t}}g(\vert y\vert)dy=-\int _{\vert y\vert <\ep} \frac{\partial}{\partial y_1}[e^{-\frac{\vert z\vert ^{2}}{4t}}]g(\vert y\vert)dy\\
&=&\int_{\vert y\vert <\ep}e^{-\frac{\vert z\vert ^{2}}{4t}}\frac{y_1}{\vert y\vert}g'(\vert y\vert)dy,
\end{eqnarray*}
using integration by part over $y_1$, noticing that the boundary terms vanishes since $g(\ep)=0$. Again we differentiate with respect to $r$, write $\int_{\vert y\vert <\ep}fdy=\int_{\vert y'\vert <\ep}\int_{-\alpha}^{\alpha}fdy_1dy'$ with the shortcut $\alpha=\alpha(y'):=\sqrt{\ep ^2-\vert y'\vert ^2}$, use integration by part over $y_1$  and reach
\begin{multline*}
(4\pi t)^{\frac k2} \psi _{rr} (t,r)=\int_{\vert y'\vert <\ep} e^{-\frac{\vert y'\vert ^2}{4t}}\frac{-\alpha g'(\ep)}{\ep}\left(e^{-\frac{(r-\alpha)^2}{4t}}+e^{-\frac{(r+\alpha)^2}{4t}}\right)dy'
\\
+\int_{\vert y\vert <\ep}e^{-\frac{\vert z\vert ^2}{4t}}\left(
\frac 1{\vert y\vert}g'(\vert y\vert)-\frac{y_1^{2}}{\vert y\vert^{3}}g'(\vert y\vert)+\frac{y_1^{2}}{\vert y \vert^{2}}g''(\vert y\vert)
\right)dy.
\end{multline*}
 Notice that the first integral term, over $\vert y'\vert <\ep$, is the boundary term. Putting all together we see that the sign of $\psi_{rr}(t,r)-\frac 1 r\psi_r(t,r)$ is that of
\begin{eqnarray}
I(t,r):=\int_{\vert y\vert <\ep}e^{-\frac{\vert z\vert ^2}{4t}}
\frac{y_1^{2}}{\vert y \vert^{2}}\left(g''(\vert y\vert)-\frac{1}{\vert y\vert}g'(\vert y\vert)\right)dy+\int_{\vert y\vert <\ep}e^{-\frac{\vert z\vert ^2}{4t}}\frac{r-y_1}{r\vert y\vert}g'(\vert y\vert)
dy\nonumber \\
+\int_{\vert y'\vert <\ep} e^{-\frac{\vert y'\vert ^2}{4t}}\frac{-\alpha g'(\ep)}{\ep}\left(e^{-\frac{(r-\alpha)^2}{4t}}+e^{-\frac{(r+\alpha)^2}{4t}}\right)dy'=:(I_1+I_2+I_3)(t,r).\nonumber
\end{eqnarray}
Using again integration by part over $y_1$ we get
\begin{eqnarray*}
I_2(t,r)=\int_{\vert y'\vert <\ep}e^{-\frac{\vert y'\vert ^2}{4t}}\frac{2t}{r}
\left(e^{-\frac{(r-\alpha)^2}{4t}}\frac{g'(\ep)}{\ep}-e^{-\frac{(r+\alpha)^2}{4t}}\frac{g'(\ep)}{\ep}\right)dy'\\
-\frac{2t}{r}\int_{\vert y\vert <\ep}e^{-\frac{\vert z\vert^2}{4t}}\frac{y_1}{\vert y\vert ^2}\left(g''(\vert y\vert)-\frac{1}{\vert y\vert}g'(\vert y\vert)\right)dy.
\end{eqnarray*}
Putting all together we arrive at
\begin{eqnarray*}
I(t,r)&=& \int_{\vert y\vert <\ep}e^{-\frac{\vert z\vert^2}{4t}}\frac{y_1(y_1-\frac{2t}{r})}{\vert y\vert ^2}\left(g''(\vert y\vert)-\frac{1}{\vert y\vert}g'(\vert y\vert)\right)dy\\
&&+\int _{\vert y'\vert <\ep}
e^{-\frac{\vert y'\vert ^2}{4t}}
\frac{t}{r}\frac{-g'(\ep)}{\ep}e^{-\frac{(r+\alpha)^{2}}{4t}}\left(e^{\alpha r t^{-1}}(\alpha r t^{-1}-2)+\alpha r t^{-1}+2
\right)dy'\\
&=:&(J_1+J_2)(t,r).
\end{eqnarray*}
We easily see that $e^\lambda (\lambda -2)+\lambda +2\geq 0$ for all $\lambda \geq 0$ and, since $g'(\ep)\leq 0$, we have $J_2(t,r)\geq 0$ for all $t>0$, $r>0$. Nonetheless even if we assume
\begin{equation}
\label{tentative2}
g''(s)-\frac 1 sg'(s)\geq 0, \text{ for all }0< s< \ep,
\end{equation}
we cannot hope the term $J_1(t,r)$ to remain nonegative for all $t>0$, $r>0$ --- unless it vanishes--- because of the term $y_1(y_1-\frac{2t}{r})$. This is a strong indication that the nonnegative initial data for which \eqref{hyp-cauchy-bis} holds are rather \lq\lq rare'' or, in other words and roughly speaking, condition \eqref{hyp-cauchy-bis} seems to be very \lq\lq unstable''. In particular for $k=1$ or $g'(\ep)=0$, the nonnegative favorable term $J_2(t,r)$ vanishes.

Nevertheless,  assuming equality in \eqref{tentative2} obviously saves the day and provides the following example, which is an important tool for the proof of Theorem \ref{th:fujita-plus} on the Fujta blow-up phenomena.
 
\begin{example}\label{ex:compactly} Let $\ep>0$ be given. Function $g(s):=(\ep ^2-s^{2})_+$ clearly satisfies \eqref{tentative1} and the equality in \eqref{tentative2}, so that the solution of the Cauchy problem \eqref{eq-heat-plus} starting from $u_0(x)=g(\vert x\vert)$ is given by the convolution formula \eqref{sol-convolution}.
\end{example}

\begin{rem}
From Example \ref{ex:gauss}, Example \ref{ex:compactly} and the comparison principle we deduce that, for any nonnegative and nontrivial initial data $u_0$ (not necessarily radial) having tails that can be dominated by a Gaussian tail, the solution $u(t,x)$ of the Heat equation \eqref{eq-heat-plus} starting from $u_0$ satisfies
$$
\frac{C_1}{(1+t)^{\frac k 2}}\leq \Vert u(t,\cdot)\Vert _{L^\infty(\R^N)}\leq \frac{C_2}{(1+t)^{\frac k 2}}, \text{ for all } t\geq 0,
$$
for some positive constants $C_1=C_1(u_0)$, $C_2=C_2(u_0)$.
\end{rem}

\begin{rem}
Assume that the initial data $u_0(x)=g(\vert x\vert)$ is such that the conclusion of Theorem \ref{th:radial-bis} holds, and that $g_i:=\int_{\R^k}\vert x\vert ^{i} g(\vert x\vert)dx<+\infty$ for $i=0,1$. Then the solution  becomes asymptotically self-similar in the sense that,
\begin{equation*}\label{bidule}
\left \| u(t,\cdot)-g_0\frac{1}{(4\pi t)^{\frac k2}}e^{-\frac{\vert \, \cdot\,  \vert ^2}{4t}}\right \|_{L^\infty(\R ^N)}=o\left(t^{-\frac{k}{2}}\right)\;\text{ as $t\to+\infty$}.
\end{equation*} 
This can be proved from the convolution formula \eqref{sol-convolution} by reproducing the standard argument for the (classical) Heat equation, see  for instance  the monograph of Giga, Giga and Saal \cite[subsection 1.1.5]{Gig-Gig-Saa-10}.
\end{rem}

\section{Global vs blow-up solutions for the doubly nonlinear Cauchy problems}\label{s:fujita}

In this section, as explained in Section \ref{s:well}, we wonder if the local solution to the Cauchy problem associated with equations \eqref{eq-fujita-moins} or \eqref{eq-fujita-plus} is global or not. 

Let us recall that, in his seminal work \cite{Fuj-66}, Fujita considered solutions $u(t,x)$ to the nonlinear ($p>0$) Heat equation 
\begin{equation}
\label{eq-fujita-66}
\partial _t u=\Delta u+u^{1+p}\quad \text{ in } (0,\infty)\times \R^{N},
\end{equation}
supplemented with a nonnegative and nontrivial initial data and proved the following: when $0<p<\frac 2 N$, any  solution blows up in finite time whereas, when $p>\frac 2N$ some solutions with small initial data are global in time. Hence, for equation \eqref{eq-fujita-66}, $p_F:=\frac 2 N$ is the so-called Fujita exponent. Let us observe that, as well-known, solutions to the Heat equation  $\partial _t u=\Delta u$ tend to zero as $t\to\infty$ like  $\mathcal O\left(t^{-\frac  N 2}\right)$, which is a formal  argument to guess $p_F=\frac 2 N$.

In the sequel we prove that $p_F=0$ for equation \eqref{eq-fujita-moins} involving $\P _k ^{-}$, whereas $p_F=\frac 2 k$ for equation \eqref{eq-fujita-plus} involving $\P _k^{+}$.

\subsection{Operator $\P _k^-$}\label{ss:fujita-moins}

As seen in Example \ref{ex:exp}, the $L^{\infty}$ norm of some solutions to the Heat equation \eqref{eq-heat-moins} decrease exponentially fast to zero at large times. This is a strong indication that the Fujita exponent is $p_F=0$. 

\begin{prop}[Some global solutions with light tails] Let $p>0$ be given. Assume $0\leq u_0(x)\leq Ce^{-\frac{\vert x\vert ^{2}}{2k}}$ for some $0< C\leq 1$. Then the solution to \eqref{eq-fujita-moins} starting from $u_0$ is global in time and satisfies
$$
\Vert u(t,\cdot)\Vert_{L^{\infty}(\R^{N})}\leq \begin{cases}\frac{C}{(1-C^p)^{1/p}}e^{-t} & \text{ if } 0<C<1\\
C & \text{ if } C=1. \end{cases}
$$
\end{prop}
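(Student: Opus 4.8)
\textit{Proof plan.}

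The plan is to build an explicit classical supersolution of \eqref{eq-fujita-moins} of the separated form $\bar u(t,x)=h(t)\,v(x)$, where $v(x):=e^{-\vert x\vert ^2/(2k)}$ is the eigenelement of $\P _k^-$ exhibited in Example \ref{ex:exp}, so that $\P _k^- v=-v$, and $h:[0,+\infty)\to(0,+\infty)$ is to be chosen. Since $\P _k^-$ is positively $1$-homogeneous and $h(t)\geq 0$, one has $\P _k^-\bar u=F_k^-(h(t)D^2v)=h(t)\,\P _k^- v=-h(t)v$, hence
$$
\partial _t\bar u-\P _k^-\bar u-\bar u^{1+p}=\bigl(h'(t)+h(t)\bigr)v-h(t)^{1+p}v^{1+p}.
$$
Thus, if $h$ solves the Bernoulli ODE $h'=h^{1+p}-h$ with $h(0)=C$, the right-hand side equals $h(t)^{1+p}\,v\,(1-v^p)$, which is $\geq 0$ because $0\leq v\leq 1$; therefore $\bar u$ is a smooth, hence viscosity, supersolution of \eqref{eq-fujita-moins}.

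First I would solve the ODE explicitly. Setting $w:=h^{-p}$ turns it into the linear equation $w'=p(w-1)$, $w(0)=C^{-p}$, whence $w(t)=1+(C^{-p}-1)e^{pt}$ and
$$
h(t)=\bigl(1+(C^{-p}-1)e^{pt}\bigr)^{-1/p}.
$$
For $0<C<1$ we have $C^{-p}-1=(1-C^p)/C^p>0$, so $h$ is well defined, positive and decreasing on $[0,+\infty)$, and dropping the term $1$ in the bracket gives $h(t)\leq \bigl((C^{-p}-1)e^{pt}\bigr)^{-1/p}=\frac{C}{(1-C^p)^{1/p}}e^{-t}$; for $C=1$ one simply has $h\equiv 1$. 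In both cases $0<h(t)\leq C\leq 1$ for all $t\geq 0$, so the homogeneity step above is legitimate and $\bar u$ is bounded.

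Then I would invoke the comparison principle. By hypothesis $0\leq u_0(x)\leq C\,v(x)=\bar u(0,x)$, the constant $0$ is a (sub)solution of \eqref{eq-fujita-moins}, and $\bar u$ is a bounded supersolution; the comparison principle for \eqref{eq-fujita-moins} obtained as in Section \ref{s:well} then yields $0\leq u(t,x)\leq \bar u(t,x)$ on the maximal existence interval. In particular $\Vert u(t,\cdot)\Vert _{L^\infty(\R^N)}\leq \Vert \bar u(t,\cdot)\Vert _{L^\infty(\R^N)}=h(t)\leq C\leq 1$, so the local solution stays bounded; by the standard continuation argument (the local solution extends as long as its sup norm remains finite) it is global in time. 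The announced estimate is then exactly $\Vert u(t,\cdot)\Vert _{L^\infty(\R^N)}\leq h(t)$ together with the two bounds on $h$ recorded above.

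The only genuinely delicate points are the rigorous application of the comparison principle on the whole of $\R^N$ — harmless here because $\bar u$ is bounded and $u_0$ lies in the class covered by the quoted well-posedness result — and the continuation/blow-up alternative; the construction of $\bar u$ and the resolution of the ODE for $h$ are elementary once Example \ref{ex:exp} is available.
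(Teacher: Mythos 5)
Your proof is correct and takes essentially the same approach as the paper: a separated supersolution built on the Gaussian eigenfunction of $\P_k^-$, an ODE for the time factor, and the comparison principle plus continuation. In fact your $\bar u=h(t)v(x)$ with $h(t)=\bigl(1+(C^{-p}-1)e^{pt}\bigr)^{-1/p}$ is exactly the paper's $f(t)\,Ce^{-t}e^{-\vert x\vert^2/(2k)}$, so the two arguments differ only in whether one factors out the stationary eigenfunction (autonomous Bernoulli ODE) or the decaying heat-equation solution (nonautonomous ODE).
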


\begin{proof}
 We define $v(t,x):=Ce^{-t}e^{-\frac{\vert x\vert ^{2}}{2k}}$ which is the solution of the Heat equation \eqref{eq-heat-moins} starting from $v_0(x)=Ce^{-\frac{\vert x\vert ^{2}}{2k}}$. We look after a supersolution to \eqref{eq-fujita-moins} in the form 
 $$
\overline  u(t,x)=f(t)v(t,x),
 $$
 with $f$ to be chosen and starting from $f(0)=1$. We compute
 $$
 (\partial _t \overline u -\P _k^{-}\overline u -\overline u^{1+p})(t,x)=f'(t)v(t,x)-f^{1+p}(t)v^{1+p}(t,x)
 $$
 which is nonnegative provided
 $$
 \frac{f'(t)}{f^{1+p}(t)}\geq \Vert v(t,\cdot)\Vert _{L^{\infty}}^{p}=C^{p}e^{-pt}.
 $$
 Since $C\leq 1$ the Cauchy problem $\frac{f'(t)}{f^{1+p}(t)}=C^pe^{-pt}$, $f(0)=1$ is globally solved as 
 $$
 f(t)=\frac{1}{(1+C^{p}(e^{-pt}-1))^{1/p}}\leq \begin{cases} \frac{1}{(1-C^{p})^{1/p}} & \text{ if } 0<C<1\\
 e^{t} & \text{ if } C=1.\end{cases}
 $$
 From the comparison principle, we deduce $0\leq u(t,x)\leq \overline u(t,x)$ for all $t>0$, $x\in \R^N$, which provides the result.
\end{proof}

The solutions \eqref{self-similar-2} to the Heat equation \eqref{eq-heat-moins} provide examples of global solutions to \eqref{eq-fujita-moins} with initial heavy tails, provided $p$ is large enough.

\begin{prop}[Some global solutions with heavy tails] Let $\beta>0$ be given. Let $p>\frac 1 \beta$ be given. Assume $0\leq u_0(x)\leq \frac{C}{(\vert x\vert^{2}+\ep)^{\beta}}$ for some $C>0$, $\ep>0$ satisfying
$$
\frac{C^{p}}{\ep^{p\beta-1}}\leq 2k\frac{p\beta-1}{p}.
$$
Then the solution to \eqref{eq-fujita-moins} starting from $u_0$ is global in time and satisfies
$$
\Vert u(t,\cdot)\Vert_{L^{\infty}(\R^{N})}\leq \begin{cases}\frac{C'}{t^{\beta}} & \text{ if } \frac{C^{p}}{\ep^{p\beta-1}}< 2k\frac{p\beta-1}{p}\medskip \\
\frac{C'}{t^{1/p}} & \text{ if } \frac{C^{p}}{\ep^{p\beta-1}}= 2k\frac{p\beta-1}{p},\end{cases}
$$
for some $C'=C'(\beta,p,k,\ep,C)>0$.
\end{prop}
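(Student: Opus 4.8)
The plan is to reproduce the supersolution-plus-comparison scheme of the previous proposition, but with the light-tailed building block replaced by the heavy-tailed self-similar solution \eqref{self-similar-2}. Set
$$
v(t,x):=\frac{C}{(\vert x\vert^{2}+2kt+\ep)^{\beta}},
$$
which is precisely \eqref{self-similar-2} with $\mu=C$, hence solves the Heat equation \eqref{eq-heat-moins}; note that $\Vert v(t,\cdot)\Vert_{L^{\infty}(\R^{N})}=C(2kt+\ep)^{-\beta}$, the maximum being attained at $x=0$. Then I look for a supersolution of \eqref{eq-fujita-moins} of the form $\overline u(t,x):=f(t)v(t,x)$ with $f(0)=1$. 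Since $\P_k^{-}$ is positively $1$-homogeneous, $\P_k^{-}(f(t)v)=f(t)\P_k^{-}v$, and using that $v$ solves \eqref{eq-heat-moins} one gets
$$
(\partial_t\overline u-\P_k^{-}\overline u-\overline u^{1+p})(t,x)=f'(t)v(t,x)-f^{1+p}(t)v^{1+p}(t,x),
$$
which is nonnegative on $(0,+\infty)\times\R^{N}$ as soon as $f'(t)/f^{1+p}(t)\geq\Vert v(t,\cdot)\Vert_{L^{\infty}}^{p}=C^{p}(2kt+\ep)^{-p\beta}$.

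Next I solve the scalar Cauchy problem $f'(t)/f^{1+p}(t)=C^{p}(2kt+\ep)^{-p\beta}$, $f(0)=1$, by separating variables. Integrating yields
$$
f(t)^{-p}=1-pC^{p}\int_{0}^{t}\frac{ds}{(2ks+\ep)^{p\beta}}=1-\frac{pC^{p}}{2k(p\beta-1)}\Bigl(\ep^{1-p\beta}-(2kt+\ep)^{1-p\beta}\Bigr),
$$
where the hypothesis $p>\frac1\beta$, i.e. $p\beta>1$, is exactly what makes the integral converge and keeps the bracketed term in $[0,\ep^{1-p\beta})$. The assumption $\frac{C^{p}}{\ep^{p\beta-1}}\leq 2k\frac{p\beta-1}{p}$ is nothing but $\frac{pC^{p}\ep^{1-p\beta}}{2k(p\beta-1)}\leq1$, which guarantees that $f(t)^{-p}$ stays positive for all $t\geq0$, so $f$ is globally defined, positive and increasing, and $\overline u$ is a genuine global supersolution.

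Finally I split into the two cases. If the inequality is strict, then $f(t)^{-p}\geq\delta:=1-\frac{pC^{p}\ep^{1-p\beta}}{2k(p\beta-1)}>0$, so $f$ is bounded by $\delta^{-1/p}$; by the comparison principle for \eqref{eq-fujita-moins} recalled in Section \ref{s:well} one gets $0\leq u(t,x)\leq\overline u(t,x)\leq\delta^{-1/p}v(t,x)$, whence $\Vert u(t,\cdot)\Vert_{L^{\infty}(\R^N)}\leq\delta^{-1/p}C(2kt+\ep)^{-\beta}\leq C't^{-\beta}$. If equality holds, the constant term cancels and $f(t)^{-p}=\frac{pC^{p}}{2k(p\beta-1)}(2kt+\ep)^{1-p\beta}$, i.e. $f(t)=\bigl(\tfrac{2k(p\beta-1)}{pC^{p}}\bigr)^{1/p}(2kt+\ep)^{(p\beta-1)/p}$; then comparison gives $\Vert u(t,\cdot)\Vert_{L^{\infty}(\R^N)}\leq f(t)\,C(2kt+\ep)^{-\beta}=C'(2kt+\ep)^{-1/p}\leq C't^{-1/p}$, using the identity $\frac{p\beta-1}{p}-\beta=-\frac1p$. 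The only mildly delicate point is the bookkeeping that makes the stated threshold on $(C,\ep)$ coincide exactly with the condition for $f$ to remain finite for all time; the rest is identical to the preceding proposition, relying on the homogeneity of $\P_k^{-}$, the explicit solution \eqref{self-similar-2}, and the comparison principle.
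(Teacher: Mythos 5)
Your proof is correct and follows exactly the route the paper intends: the paper's own proof consists of defining the same heavy-tailed solution $v(t,x)=C(\vert x\vert^{2}+2kt+\ep)^{-\beta}$ of \eqref{eq-heat-moins} and then declaring the argument ``similar to the previous one,'' which is precisely the supersolution $\overline u=f(t)v$ plus comparison-principle scheme you carry out. Your explicit integration of $f'/f^{1+p}=C^{p}(2kt+\ep)^{-p\beta}$, the identification of the threshold on $(C,\ep)$ with global positivity of $f^{-p}$, and the exponent bookkeeping $\frac{p\beta-1}{p}-\beta=-\frac1p$ in the critical case are all accurate.
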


\begin{proof} We define $v(t,x):=\frac{C}{(\vert x\vert ^2+2kt+\ep)^\beta}$ which is the solution of the Heat equation starting from $v_0(x)=\frac{C}{(\vert x\vert ^2+\ep)^\beta}$. Next, the proof is similar as the previous one.
\end{proof}

From any of the two above propositions, we thus conclude that we do have $p_F=0$. Notice also that $p_F=0$ also follows from the following observation  from \cite{Bir-Gal-Leo-17}: for any $p>0$, equation \eqref{eq-fujita-moins} admits the stationary solutions
$$
\left(\frac{2k}{p(\mu+\vert x\vert ^{2})}\right)^{\frac{1}{p}},
$$
which corresponds to the critical case $p=\frac 1\beta$ of the above proposition.

\subsection{Operator $\P _k^+$}\label{ss:fujita-plus}

As seen in Example \ref{ex:gauss}, the $L^{\infty}$ norm of some solutions to the Heat equation \eqref{eq-heat-plus} decrease like $t^{-\frac k 2}$ at large times. This is an indication that the Fujita exponent  $p_F$ is smaller than $\frac 2 k$. This is confirmed by the following construction of global solutions when $p>\frac 2 k$.

\begin{prop}[Some global solutions when $p>\frac 2 k$] Assume $p>\frac 2 k$. Let $a>0$ be given.  Assume $0\leq u_0(x)\leq \frac{C}{(4\pi a)^{\frac k 2}}e^{-\frac{\vert x\vert ^{2}}{4a}}$ for some $C>0$. Then, if $C>0$ is small enough, the solution to \eqref{eq-fujita-plus} starting from $u_0$ is global in time and satisfies
$$
\Vert u(t,\cdot)\Vert_{L^{\infty}(\R^{N})}\leq  \frac{C'}{(a+t)^{\frac k 2}}
$$
for some $C'=C'(p,k,a,C)>0$.
\end{prop}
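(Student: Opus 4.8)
The plan is to reproduce the strategy of the two preceding propositions, using as a building block the Gaussian solution of Example \ref{ex:gauss} and multiplying it by a suitable time-dependent factor in order to absorb the reaction term $u^{1+p}$.

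First I would set
$$v(t,x):=\frac{1}{(4\pi(a+t))^{\frac k2}}e^{-\frac{|x|^2}{4(a+t)}},$$
which, by Example \ref{ex:gauss}, is the (smooth) solution of the Heat equation \eqref{eq-heat-plus} starting from $\frac{1}{(4\pi a)^{k/2}}e^{-|x|^2/(4a)}$; in particular $\partial_t v=\P_k^+v$ holds classically and $\|v(t,\cdot)\|_{L^\infty(\R^N)}=(4\pi(a+t))^{-k/2}$. Then I would look for a supersolution of \eqref{eq-fujita-plus} of the form $\overline u(t,x):=f(t)v(t,x)$ with $f(0)=C$, so that $\overline u(0,\cdot)=Cv(0,\cdot)\geq u_0$. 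Since $f$ depends on $t$ only and will be kept positive, $D^2\overline u=f(t)D^2v$ and hence $\P_k^+\overline u=f(t)\P_k^+v$, whence
$$(\partial_t\overline u-\P_k^+\overline u-\overline u^{1+p})(t,x)=f'(t)v(t,x)-f^{1+p}(t)v^{1+p}(t,x),$$
which is nonnegative as soon as $\frac{f'(t)}{f^{1+p}(t)}\geq\|v(t,\cdot)\|_{L^\infty}^{p}=(4\pi)^{-kp/2}(a+t)^{-kp/2}$.

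Second, I would solve the Cauchy problem $\frac{f'}{f^{1+p}}=(4\pi)^{-kp/2}(a+t)^{-kp/2}$, $f(0)=C$, explicitly as
$$f(t)=\left(C^{-p}-\frac{p}{(4\pi)^{kp/2}}\int_0^t\frac{ds}{(a+s)^{kp/2}}\right)^{-1/p}.$$
This is precisely where the hypothesis $p>\frac2k$ enters: it gives $\frac{kp}{2}>1$, hence $\int_0^{+\infty}(a+s)^{-kp/2}\,ds=\frac{2}{kp-2}\,a^{1-kp/2}<+\infty$; therefore, provided $C>0$ is small enough that $C^{-p}$ exceeds $\frac{p}{(4\pi)^{kp/2}}$ times this finite integral, the function $f$ is well defined, positive, increasing and bounded on $[0,+\infty)$ by a constant $M=M(p,k,a,C)$. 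Consequently $\overline u=fv$ is a smooth, globally defined (hence viscosity) supersolution of \eqref{eq-fujita-plus} satisfying $\overline u(t,x)\leq M(4\pi(a+t))^{-k/2}$.

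Finally, I would invoke the comparison principle for \eqref{eq-fujita-plus} recalled in Section \ref{s:well}: since $0$ is a subsolution, $\overline u$ a global supersolution and $0\leq u_0\leq\overline u(0,\cdot)$, the local solution $u$ satisfies $0\leq u\leq\overline u$ throughout its interval of existence; as $\overline u$ is globally bounded, the usual continuation argument forces $u$ to be global, and the announced bound follows with $C'=M(4\pi)^{-k/2}$. The only points requiring a little care are the justification that, $\overline u$ being smooth, the pointwise supersolution inequality is equivalent to the viscosity one, and that the comparison/continuation argument is legitimate in the framework of Section \ref{s:well}; both are routine. In any event this is the \emph{easy half} of the Fujita dichotomy for $\P_k^+$, the delicate part being the systematic blow-up for $0<p<\frac2k$ treated in Theorem \ref{th:fujita-plus}.
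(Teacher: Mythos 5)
Your proposal is correct and follows essentially the same route as the paper: the same Gaussian solution $v$ from Example \ref{ex:gauss}, the same ansatz $\overline u=f(t)v$, the same ODE $\frac{f'}{f^{1+p}}=\Vert v(t,\cdot)\Vert_{L^\infty}^p$ whose global solvability for small $C$ rests on $\frac{pk}{2}>1$, and the comparison principle to conclude. The only (cosmetic) difference is that you place the small constant in $f(0)=C$ rather than in $v$ itself.
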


\begin{proof}
 We define $v(t,x):=\frac{C}{(4\pi(a+t))^{\frac k2}}e^{-\frac{\vert x\vert^{2}}{4(a+t)}}$ which is the solution of the Heat equation \eqref{eq-heat-plus} starting from $v_0(x)=\frac{C}{(4\pi a)^{\frac k2}}e^{-\frac{\vert x\vert^{2}}{4a}}$. We look after a supersolution to \eqref{eq-fujita-plus} in the form 
 $$
\overline  u(t,x)=f(t)v(t,x),
 $$
 with $f$ to be chosen and starting from $f(0)=1$. We compute
 $$
 (\partial _t \overline u -\P _k^{+}\overline u -\overline u^{1+p})(t,x)=f'(t)v(t,x)-f^{1+p}(t)v^{1+p}(t,x)
 $$
 which is nonnegative provided
 $$
 \frac{f'(t)}{f^{1+p}(t)}\geq \Vert v(t,\cdot)\Vert _{L^{\infty}}^{p}=\frac{C^{p}}{(4\pi(a+t))^{\frac{pk}{2}}}.
 $$
If $C>0 $ is sufficiently small, the Cauchy problem $\frac{f'(t)}{f^{1+p}(t)}=\frac{C^{p}}{(4\pi(a+t))^{\frac{pk}{2}}}$, $f(0)=1$ is globally solved as 
 $$
 f(t)=\frac{1}{\left(1+\frac{pC^{p}}{(4\pi)^{\frac{pk}{2}}(\frac{pk}{2}-1)}\left(\frac{1}{(a+t)^{\frac{pk}{2}-1}}-\frac{1}{a^{\frac{pk}{2}-1}}\right)\right)^{\frac 1p}}\leq \frac{1}{\left(1-\frac{pC^{p}}{(4\pi)^{\frac{pk}{2}}(\frac{pk}{2}-1)a^{\frac{pk}{2}-1}}\right)^{\frac 1p}}.
 $$
 From the comparison principle, we deduce $0\leq u(t,x)\leq \overline u(t,x)$ for all $t>0$, $x\in \R^N$, which provides the result.
\end{proof}

Our last main result shows that $p_F=\frac 2k$.

\begin{theo}[Systematic blow-up when $p<\frac 2 k$]\label{th:fujita-plus} Assume $0<p<\frac 2 k$. Then for any $u_0\in UC(\R^{N})$ nonnegative and nontrivial, the solution to \eqref{eq-fujita-plus} starting from $u_0$ blows up in finite time.
\end{theo}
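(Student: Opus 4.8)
The plan is to compare the solution of \eqref{eq-fujita-plus} from below with solutions related to the lower-dimensional heat equation in $\R^k$, for which Fujita's classical blow-up result applies. The essential structural fact, which must be exploited, is the identity, valid for smooth radial functions under assumption \eqref{hyp-cauchy}, that $\P_k^+ u = \partial_{rr}\psi + \tfrac{k-1}{r}\partial_r\psi$, i.e.\ the $k$-dimensional radial Laplacian. So a natural strategy is: first reduce to a radial, compactly supported, small but nontrivial initial datum $w_0 \le u_0$ of the type in Example \ref{ex:compactly} (e.g.\ $w_0(x) = \delta(\ep^2 - |x|^2)_+$ for small $\delta, \ep$); by the comparison principle it suffices to show the solution from $w_0$ blows up. For such a datum, as long as the associated ``candidate'' $\psi$ stays concave in the sense \eqref{hyp-cauchy-bis}, the evolution under $\P_k^+$ coincides with the $k$-dimensional heat flow, and we can hope to run a Kaplan-type eigenfunction argument.

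The key steps, in order, are as follows. First, set up the comparison: let $\underline u$ be a subsolution of \eqref{eq-fujita-plus} built from the $k$-dimensional picture, and show $u \ge \underline u$, hence it is enough that $\underline u$ blows up. Second, for the subsolution I would use the solution $U(t,x) = \psi(t,|x|)$ of the \emph{linear} heat equation \eqref{eq-heat-plus} from $w_0$ (given by the convolution formula \eqref{sol-convolution} of Theorem \ref{th:radial-bis}, since $w_0$ satisfies \eqref{hyp-cauchy-bis}): this is an exact subsolution of \eqref{eq-fujita-plus} since $u^{1+p}\ge 0$, but that alone does not blow up. Instead, the standard Kaplan trick: working on a ball $B_R\subset\R^k$ (the first $k$ coordinates), let $\phi_R>0$ be the first Dirichlet eigenfunction of the Laplacian on $B_R\subset\R^k$ with eigenvalue $\lambda_R$, extended to be a function of $x'=(x_1,\dots,x_k)$ only, and consider $m(t) := \int_{B_R} u(t,x',0)\,\phi_R(x')\,dx'$ — but here one must be careful, because $u$ is a function on $\R^N$ and $\P_k^+ u$ is \emph{not} the full Laplacian; it is the Laplacian only in the ``worst'' $k$ directions. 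The cleanest route is to test against a function depending only on the first $k$ variables and to use that, for $u$ radial in $\R^N$, the restriction $\psi(t,r)$ solves \eqref{to-be-solved}, i.e.\ the genuine $k$-dimensional radial heat equation with the source $u^{1+p}$; then the ODE $m'(t) \ge -\lambda_R m(t) + c\, m(t)^{1+p}$ (Jensen's inequality on the convex nonlinearity, restricted to the ball, plus the sign of the boundary flux) forces blow-up once $R$ is chosen so large that the initial mass $m(0)$ dominates the threshold $(\lambda_R/c)^{1/p}$ — which is possible precisely when $p<2/k$, since $\lambda_R \sim R^{-2}$ while $c$ scales like $R^{-k(1+p)/2}\cdot(\text{decay of }u)$, and the competition $R^{-2}$ vs.\ $R^{-kp/2}$ swings in our favor exactly below the Fujita exponent $2/k$.

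The main obstacle, and where real care is needed, is the \emph{validity of the $k$-dimensional reduction along the flow}: the identity $\P_k^+ u = \partial_{rr}\psi + \tfrac{k-1}{r}\partial_r\psi$ requires \eqref{hyp-cauchy} to persist, and for the \emph{nonlinear} equation \eqref{eq-fujita-plus} there is no guarantee that concavity of the profile survives the reaction term. This is exactly why the statement says the proof ``is far from trivial'' and needs ``a comparison between $\P_k^+u$ and $\Delta u$ which is available for radial and smooth solutions.'' So the honest plan is to decouple: use \emph{only} the lower bound $\P_k^+ u \le \Delta u$ is the wrong direction; what we want is a \emph{subsolution} $\underline u$ whose own profile we control by construction. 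Take $\underline u(t,x) := h(t)\,G_k(\tau(t), x')$, a modulated $k$-dimensional heat kernel in the variables $x'$ (which is concave in each radial slice for the relevant range, so \eqref{hyp-cauchy-bis} holds for \emph{it}), so that $\P_k^+\underline u = \Delta_{x'}\underline u = \partial_\tau \underline u\cdot\tau'$; then choosing $h,\tau$ so that $\partial_t \underline u - \P_k^+\underline u - \underline u^{1+p}\le 0$ reduces to an ODE system, and blow-up of $h$ in finite time is arranged precisely when $p<2/k$ by the same scaling count as above. One still has to check that $\underline u$ is a genuine \emph{viscosity} subsolution of \eqref{eq-fujita-plus} — here the point is that where $\underline u$ is smooth and satisfies \eqref{hyp-cauchy-bis} the computation is classical, and at the (mild) singularity $x'=0$ one either uses smoothness of the heat kernel for $t>0$ or a standard regularization; and that $\underline u(0,\cdot)\le u_0$, which is arranged by taking the initial kernel very concentrated and very small, using only nontriviality of $u_0$ (so $u(t_0,\cdot)>0$ somewhere for some small $t_0>0$, giving room to fit a small Gaussian underneath). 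Finally invoke the comparison principle from Section \ref{s:well} (extended to \eqref{eq-fujita-plus}) to conclude $u\ge\underline u$ and hence finite-time blow-up.
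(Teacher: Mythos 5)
Your guiding intuition --- reduce to the semilinear heat equation in $\R^k$ via an inequality between $\P_k^+$ and a $k$-dimensional Laplacian, then run a Fujita/Kaplan-type integral argument whose scaling closes exactly for $p<\frac 2k$ --- is the right one, and it is the paper's strategy in spirit. But the construction you finally commit to, the cylindrical subsolution $\underline u(t,x)=h(t)\,G_k(\tau(t),x')$ with $x'=(x_1,\dots,x_k)$, fails for two independent reasons. First, $\underline u(0,\cdot)$ is positive and \emph{constant in the remaining $N-k$ variables}, so it can never be placed below a nonnegative $u_0$ that decays (let alone vanishes) in those directions; shrinking $h(0)$ or concentrating the kernel does not help, since $\underline u(0,(0,x''))>0$ for every $x''$ while $u_0$ need not be. Second, even ignoring initial data, the pointwise subsolution inequality $\partial_t\underline u-\Delta_{x'}\underline u\le\underline u^{1+p}$ reduces (taking $\tau'=1$) to $h'(t)\le h(t)^{1+p}\,G_k(\tau(t),x')^{p}$ for \emph{all} $x'$; since $\inf_{x'}G_k^p=0$, this forces $h'\le 0$, so no finite-time blow-up of $h$ can be arranged --- this is precisely why Fujita-type proofs in the subcritical range are integral arguments rather than pointwise subsolution arguments. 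Incidentally, the claimed identity $\P_k^+\underline u=\Delta_{x'}\underline u$ is false: near $x'=0$ the $x'$-Hessian of a Gaussian is negative definite and the $N-k$ zero eigenvalues enter the top $k$; only the inequality $\P_k^+\underline u\ge\Delta_{x'}\underline u$ holds (which, to be fair, is the favorable direction for a subsolution).

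The Kaplan half of your sketch is much closer to a proof and to what the paper actually does, but it leaves unresolved exactly the difficulty you yourself flag: one cannot differentiate $m(t)=\int u(t,x',0)\phi_R(x')\,dx'$ and integrate by parts when $u$ is merely a viscosity solution of the $N$-dimensional problem, and the restriction of a viscosity solution to a $k$-plane is not a viscosity supersolution of anything usable. The paper's resolution is to compare $u$ from below with $\underline w(t,x):=\theta(t,\vert x\vert)$, \emph{radial in $\R^N$}, where $\theta$ is the smooth radial solution of $\partial_t w=\Delta w+w^{1+p}$ in $\R^k$ issued from a small compactly supported radial datum below $u_0$ (after translation): for any radial function the sum of the $k$ largest eigenvalues of the Hessian dominates $\theta_{rr}+\frac{k-1}{r}\theta_r$, so $\underline w$ is a classical subsolution of \eqref{eq-fujita-plus} with the correct initial comparison, and all subsequent computations are carried out on the smooth $\underline w$. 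The blow-up itself is then obtained not with Dirichlet eigenfunctions but by testing against the linear $k$-dimensional evolution $v$ of the datum of Example \ref{ex:compactly}: the quantity $f(t)=\int_{\R^k}v(t,\vert z\vert e_1)u_0(\vert z\vert e_1)\,dz$ satisfies $f(t)\ge C_1t^{-k/2}$ from the convolution formula \eqref{sol-convolution} and $f(t)\le C_2t^{-1/p}$ from a Jensen differential inequality, which is contradictory as $t\to+\infty$ precisely when $p<\frac 2k$. A version of your eigenfunction argument could likely be made to work after this radial-smooth reduction, but as written the proposal has a genuine gap.
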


\begin{proof} Since the equation is invariant by translation in space and in view of the comparison principle, it is enough to consider the case of the compactly supported initial data
$$
u_0(x)=g(\vert x\vert):=(\ep ^2-\vert x\vert ^2)_+, \quad x\in \R^N,
$$
for a arbitrary small $\ep>0$. We assume that the solution $u(t,x)$ ($t>0$, $x\in \R^N$)  to \eqref{eq-fujita-plus} starting from $u_0$ is global in time and look after a contradiction. To start with, we make the additional assumption (to be removed in the end of the proof) that the viscosity solution is radial and smooth, in the sense that $u(t,x)=\varphi(t,\vert x\vert)$ for some $\varphi=\varphi(t,r)$ smooth on $(0,+\infty)\times[0,+\infty)$.

In some related proofs of blow-up phenomena, see \cite{Kap-63}, \cite{Fuj-66}, \cite{Alf-fuj}, the fundamental solution of the underlying linear Heat equation is used. We are not equipped with such a tool but it turns out that the solution of Example \ref{ex:compactly} has enough good properties for a modification of the argument to apply. Hence, we denote by $v(t,x)$ ($t>0$, $x\in \R ^N$) the solution to \eqref{eq-heat-plus} starting from $u_0$, as provided by Theorem \ref{th:radial-bis} and Example \ref{ex:compactly}. In particular we have $v(t,x)=\psi(t,\vert x\vert)$ for  $\psi=\psi(t,r)$ provided by the convolution formula \eqref{select} and smooth on $(0,+\infty)\times[0,+\infty)$.

We define the quantity (notice that we integrate over $z\in \R^k$)
$$
f(t):=\int _{\R ^k}v(t,\vert z\vert e_1)u_0(\vert z\vert e_1)dz=\int _{\R ^k}\psi(t,\vert z\vert)g(\vert z\vert)dz,
$$
where $e_1$ is the first unit vector of the canonical basis of $\R^N$. We aim at finding estimates of $f(t)$ from below and above which are incompatible as $t\to+\infty$. 

From the expression of the initial data, we have
$$
f(t)\geq \frac{\ep^2}{2}\int_{\vert z\vert <\ep/\sqrt 2}v(t,\vert z\vert e_1)dz.
$$
Since $v(t,x)$ is given by the convolution formula \eqref{sol-convolution} we see, from the expression of the initial data, that, for any $\vert x\vert <\ep/\sqrt 2$ and $t\geq 1$, $v(t,x)\geq \frac{C}{t^{\frac k 2}}$ for some $C=C(\ep)>0$. As a result, we reach the estimate from below
\begin{equation}
\label{below}
f(t)\geq \frac{C_1}{t^{\frac k 2}}, \quad \forall t\geq  1,
\end{equation}
for some $C_1=C_1(\ep)>0$.

Next, for a given $t>0$ and any small $\alpha>0$, we let
$$
g(s):=\int_{\R^{k}} v(t-s+\alpha,\vert z\vert e_1)u(s,\vert z\vert e_1)dz, \quad 0\leq s\leq t.
$$
We differentiate with respect to $s$ and use the equations satisfied by $v$ and $u$ to reach
\begin{eqnarray*}
g'(s)&=&\int_{\R ^{k}}\left(-\P _k^{+}v(t-s+\alpha,\vert z\vert e_1) u(s,\vert z\vert e_1)+v(t-s+\alpha,\vert z\vert e_1)\P _k ^{+}u(s,\vert z\vert e_1)\right)dz\\
 &&+\int_{\R ^{k}}v(t-s+\alpha,\vert z\vert e_1)u^{1+p}(s,\vert z\vert e_1)dz=:h_1(s)+h_2(s).
\end{eqnarray*}
A first key point is that, as understood in Section \ref{s:radial} and roughly speaking, $\P _k^{+}v$ corresponds to the Laplacian in dimension $k<N$. Another crucial point is that, for a radial fonction $u$, $\mathcal P_k^{+}u$ is always larger than the Laplacian in dimension $k<N$, this following from the beginning of Section \ref{s:radial}. Precisely, denoting $\vert S_{k-1}\vert$ the area of the unit hypersphere of $\R^{k}$, we have
\begin{eqnarray*}
h_1(s)&\geq& \vert S_{k-1}\vert \int_0^{+\infty}\Big((-\psi_{rr}-\frac{k-1}{r}\psi_r)(t-s+\alpha,r)\varphi(s,r)\\
&&+ \psi(s,r)(\varphi_{rr}+\frac{k-1}{r}\varphi_r)(t-s+\alpha,r)\Big)r^{k-1}dr
\end{eqnarray*}
which is nonnegative as seen by integrating by parts. Next, from the convolution formula \eqref{sol-convolution} and Fubini-Tonelli theorem, we see that, for all $\tau>0$,
$$
\int _{\R^{k}} v(\tau,\vert z\vert e_1)dz=\Vert g(\vert \cdot \vert)\Vert _{L^{1}(\R ^{k})}=:C(\ep,k)^{\frac 1 p}=C^{\frac 1 p}>0.
$$
Therefore we have, from Jensen inequality,
$$
g'(s)\geq h_2(s)\geq C\left( \int _{\R ^{k}} v(t-s+\alpha,\vert z\vert e_1)u(s,\vert z\vert e_1) dz\right)^{1+p}=Cg^{1+p}(s).
$$
Integrating this differential inequality from $0$ to $t$, we get $Ct\leq \frac{-1}{p}\left(\frac{1}{g^p(t)}-\frac{1}{g^p(0)}\right)\leq \frac{1}{pg^p(0)}$. Now letting $\alpha \to 0$, this is recast
\begin{equation}
\label{above}
f(t)\leq \frac{C_2}{t^{\frac 1p}}, \quad \forall t\geq 1,
\end{equation}
for some $C_2=C_2(p,k,\ep)>0$. As announced, letting $t\to +\infty$ into \eqref{below} and \eqref{above} contradicts $0<p<\frac 2 k$.

It remains to remove the assumption that $u$ is radial and smooth, which can be done thanks to the comparison principle and the crucial point mentioned above concerning radial solutions. Indeed, let us denote by $w(t,x)$ the solution to
$$
\partial _t w=\Delta w +w^{1+p} \quad \text{ in } (0,+\infty)\times \R ^{k},
$$
starting from $w_0(x)=u_0(x)=g(\vert x\vert)$, for which we know that $w(t,x)=\theta(t,\vert x\vert)$ for some $\theta=\theta(t,r)$ smooth on $(0,+\infty)\times [0,+\infty)$. We switch to $\R ^{N}$ by letting
$$
\underline w(t,x):=\theta(t,\vert x\vert e_1), \quad t>0,\, x\in \R ^N.
$$
Since $\Delta \underline w\leq \mathcal P _k ^+\underline w$, we deduce from the comparison principle that $\underline w \leq u$, and it suffices to prove the blow-up of $\underline w$. Since $\underline w$ possesses all the necessary properties, we can reproduce the above argument with $\underline w$ playing the role of $u$. This concludes the proof of Theorem \ref{th:fujita-plus}.
\end{proof}

\bibliographystyle{siam}  

\bibliography{biblio}

\end{document}